\newtheorem{lemma}{Lemma}
\newtheorem{example}{Example}
\newtheorem{theorem}{Theorem}
\newtheorem{proposition}{Proposition}
\newtheorem{definition}{Definition}
\begin{document}

\author[M. B\'ona]{Mikl\'os  B\'ona}
\title[A Bijective Proof of an Identity Extending a 
Result of Haj\'os]{A Bijective Proof of an Identity Extending a 
Classic Result of Haj\'os}
\address{\rm Mikl\'os B\'ona, Department of Mathematics, 
University of Florida,
358 Little Hall, 
PO Box 118105, 
Gainesville, FL 32611--8105 (USA)
}

\date{}

\begin{abstract} 
We provide bijective proofs of two classic identities
that are very simple to prove using generating functions, 
but surprisingly difficult to prove combinatorially. The problem
of finding a bijective proof for the first identity was first raised
in the 1930s. The second, more involved identity takes the first one
a step further. 
\end{abstract}

\maketitle

\centerline{{\em Dedicated to the memory of Herb Wilf}}

\section{Introduction}
\subsection{A Classic Problem}
The obvious identity 
\[\frac{1}{1-4x} = \frac{1}{\sqrt{1-4x}} \cdot  \frac{1}{\sqrt{1-4x}} \]
implies, after equating coefficients of
$x^n$ on both sides, the identity
\begin{equation} \label{fullsoccer}
4^n = \sum_{i=0}^{n} {2i\choose i}{2(n-i)\choose n-i}.\end{equation} 
Finding a bijective proof for (\ref{fullsoccer}) is surprisingly difficult.

Note that (\ref{fullsoccer}) has the following simple interpretation.
There are as many ways for two teams to score a total of $2n$ goals in a 
soccer game as there are ways for those two teams to reach a tie score
at the end of the first half of the soccer game, then to hold an 
intermission, and then to reach an $n-n$ tie at the end of the game.

Richard Stanley (\cite{ejc1}, page 52)
 provides a brief history of the quest for a 
combinatorial proof of (\ref{fullsoccer}), mentioning that the problem
was raised by P\'al Veress and solved by Gy\"orgy Haj\'os in the 1930s.
 Stanley  also mentions a more recent combinatorial proof by Daniel Kleitman 
\cite{kleitman} and a discussion by M\'arta Sv\'ed \cite{sved}.  

As a ``warmup'' to our main result, in this paper we will provide a
combinatorial proof of (\ref{fullsoccer}) by constructing a simple 
bijection.  This result, and its proof,  while similar in nature
to our main result, are not necessary for understanding that main result.
Readers so inclined may skip Section \ref{warmup}. 

\subsection{Our Main Result: An Identity One Step Further} 
The obvious identity 
\[\frac{1}{(1-4x)^{3/2}} = \frac{1}{\sqrt{1-4x}} \cdot  \frac{1}{\sqrt{1-4x}} 
\cdot \frac{1}{\sqrt{1-4x}}\]
implies, after equating coefficients of
$x^n$ on both sides, the identity
\begin{equation} \label{fullhockey}
(2n+1)\cdot {2n\choose n} = \sum_{i+j+k=n}^{n} {2i\choose i}{2j\choose j}
{2k\choose k},\end{equation}
where the sum on the right-hand side is taken over all triples of 
non-negative integers that sum to $n$. 

Note that (\ref{fullhockey}) has the following interpretation. There are
as many ways for two soccer teams to  reach any score in the first half,
then hold an intermission, and then reach an $n-n$ tie at the end of the game
 as there are
 ways for two ice hockey teams
to reach a tie score at the end of the first period, hold an intermission,
reach a tie score at the end of the second period, hold an intermission, 
and then reach a score of $n-n$ at the end of the game.
 (Soccer games have two halves, 
while ice hockey games have three periods. Two outcomes are considered 
identical if they agree in the score at the end of each half or period.)

In Section \ref{main}, we give a bijective proof of (\ref{fullhockey}).
The proof will have an interesting structure. After disposing of some
trivial cases, we will construct two bijections, each of which will 
map {\em half} of the remainder of the domain onto {\em half} of the 
remaining range of the bijection that we intend to build. However, 
these two halves will not be disjoint. Fortunately, there will be a simple
bijection between the intersection of these two halves and the complement
 of their union, which will enable us to complete our proof.

\section{Warming Up. A Bijective Proof of the First Identity} \label{warmup}

In this section, we will work with {\em northeastern lattice paths}. 
A northeastern lattice path is a lattice path consisting of 
 steps $(0,1)$ (a north
step) and $(1,0)$ (an east step). The number of northeastern lattice
paths from $(0,0)$ to $(a,b)$ is clearly ${a+b\choose a}$ for all non-negative
integers $a$ and $b$, whereas the number of northeastern lattice paths starting
at $(0,0)$ and consisting of $2m$ steps is $2^{2m}=4^m$. 

Therefore, identity (\ref{fullsoccer}) that we are going to prove in this
section claims that the number of  {\em all} northeastern lattice paths
starting at $(0,0)$ and consisting of $2n$ steps agrees with the number of
northeastern lattice paths that start at $(0,0)$, end in $(n,n)$, and
are marked at some point $(i,i)$ on the main diagonal. 

The observation we present in this paragraph was made by Kleitman in 
\cite{kleitman}. A northeastern lattice path $p$ starting at $(0,0)$ and 
consisting of $2n$ steps has at least one point in common
 with the main diagonal 
$x=y$ (the point $(0,0)$ is always such a point).
 Let us say that $N=(i,i)$ is the last (most northeastern) such point, and
let $N$ split $p$ into parts $p_1$ and $p_2$. Then $p_2$ is a northeastern
lattice path starting on the main diagonal $x=y$ that {\em never touches}
the main diagonal again, while $p_1$ starts and ends in the main diagonal. 
Therefore, if we can find a bijection $b$ from the set of all northeastern
lattice paths starting at $(i,i)$ and ending in $(n,n)$
 to the set of such $p_2$,
then the map $p\rightarrow (p_1,b^{-1}(p_2))$ will be a bijection proving 
(\ref{fullsoccer}). Here $(p_1,b^{-1}(p_2))$ means the concatenation of $p_1$
 and
$b^{-1}(p_2)$, marked at the point where these two paths meet. 

We will construct such a bijection in the following theorem. 

\begin{theorem} \label{soccer}
There is a natural bijection $F_n$ from the set $X_n$
 of lattice paths with $2n$ steps  that start 
at $(0,0)$ and end in $(n,n)$ to the set $Y_n$ of 
lattice paths that start at $(0,0)$, consist of $2n$ steps
and never touch the $x=y$ line other than in their starting point. 
\end{theorem}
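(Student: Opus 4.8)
The plan is to construct $F_n$ by reflecting a suitable suffix of the path across a line parallel to the main diagonal, after first cutting the problem in half using symmetry. I would begin by recording the two sizes: $|X_n|=\binom{2n}{n}$, since a path in $X_n$ is an arbitrary arrangement of $n$ east and $n$ north steps. For $Y_n$, observe that after its first step a path can switch sides of the line $x=y$ only by landing on it; hence every path in $Y_n$ either stays strictly below the diagonal (first step east) or strictly above it (first step north), and the reflection in the diagonal that swaps east and north steps is an involution interchanging these two classes. It therefore suffices to biject the paths of $X_n$ that begin with an east step with the paths of $Y_n$ that stay strictly below the diagonal, and then transport the result by the diagonal reflection to handle the other half. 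Writing $h_k$ for the signed height (east $=+1$, north $=-1$) after $k$ steps, the first class consists of sequences with $h_1=1$ and $h_{2n}=0$, the second of sequences with $h_k\ge 1$ for every $k\ge 1$; both have size $\binom{2n-1}{n-1}$, which confirms that the reduction is consistent.

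For the map itself I would work with the inverse $F_n^{-1}$, sending a strictly-below non-returning path $q$, which ends at some height $2d>0$, to a path ending on the diagonal. The natural device is to pick a pivot line $x-y=d$ and a pivot time, keep $q$ unchanged up to that time, and reflect the rest of $q$ across the pivot line, replacing each later height $h$ by $2d-h$. Since the reflection sends the terminal height $2d$ to $0$, the image ends on the diagonal and lands in $X_n$, while folding the suffix back across the same line should return $q$. Equivalently, $F_n$ itself takes a balanced path, locates the pivot, and unfolds its suffix across the pivot line so as to drive the endpoint off the diagonal and keep the path from ever returning to it.

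The hard part, and the real content of the statement, is to pin down the pivot canonically so that this correspondence is a genuine bijection. This is precisely where the obvious recipes fail: taking the pivot line to pass through half the terminal height and the pivot time to be the first (or last) visit to that line is \emph{not} injective, because distinct non-returning paths can fold to the same balanced path --- for instance, with $n=3$ and the first-visit rule both EEEEEE and EENEEE collapse to EEENNN, and an analogous collision occurs for the last-visit rule already at $n=2$. Overcoming this seems to require reading the reflection data off finer structure rather than off the endpoint: I would decompose a balanced path at its successive returns to the diagonal into primitive excursions, seek the analogous (ascending-ladder) decomposition of a non-returning path, and match the two decompositions excursion by excursion, setting up a first-return recursion on the excursion sequence. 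Establishing that this correspondence is well defined and invertible --- equivalently, that exactly one admissible pivot survives for each path in $X_n$ --- is where essentially all the effort lies; the reduction and the reflection mechanism described above are routine by comparison.
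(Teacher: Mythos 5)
Your symmetry reduction (restrict to paths beginning with an east step, then transport by the diagonal reflection) is exactly the paper's first move, and your diagnosis is correct: I verified that under the first-visit pivot rule both EEEEEE and EENEEE fold to EEENNN, so a single endpoint-determined suffix reflection is genuinely not injective. But at the decisive moment the proposal stops. You name a program --- match the excursion decomposition of a balanced path against a ladder decomposition of a non-returning path via a first-return recursion --- and then state yourself that establishing its well-definedness and invertibility ``is where essentially all the effort lies.'' Since the theorem \emph{is} that construction, what you have written is a correct reduction plus a correct explanation of why the naive reflection fails, but not a proof: no map $F_n$ is ever defined, and nothing is verified. The root of the failure in your scheme is structural: your folding map sends every non-returning path to a path ending at $(n,n)$, so the pivot datum $d$ is erased by the very operation that is supposed to be inverted, and no choice of pivot time can restore it.

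The idea that fills this gap in the paper is to abandon the one-shot suffix reflection in favor of an \emph{iterated local} reflection, in which the lost datum is stored in the endpoint of the non-returning path instead. For $n\le k\le 2n$ let $A(n,k)$ be the paths (after the forced first east step) ending at $(k,2n-k)$, and let $B(n,k)\subseteq A(n,k)$ be those avoiding $x=y$. Reflecting the \emph{prefix} of a diagonal-touching path, up to its first diagonal point, through the line $x=y$ itself, and then translating by $(1,-1)$, gives a bijection $f_{n,k}\colon A(n,k)\to B(n,k)\cup A(n,k+1)$ --- this one-step lemma is elementary, and injectivity of everything downstream is automatic because each $f_{n,k}$ is bijective. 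Starting at $k=n$ and iterating until the image avoids the diagonal defines $F_n$; termination holds since $A(n,2n)=B(n,2n)$. The inverse is forced: a path in $Y_n$ ending at $(m,2n-m)$ must be unwound exactly $m-n$ times (translate by $(-1,1)$, reflect up to the first diagonal touch, repeat), so each unit of displacement accumulated per iteration is legible from the endpoint --- precisely the information your pivot rule destroys. If you wish to salvage your single-reflection framework you would have to carry out the excursion-matching argument in full, which you have not done; the iterated lemma above is the shorter route and replaces the canonical-pivot problem entirely.
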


Note the following interpretation of Theorem \ref{soccer}. 
There are as many ways for a soccer game to end in an $n-n$ tie
as there are ways for a soccer game to bring a total of $2n$ goals
so that the score is {\em never} a tie other than at 0-0. 

The following lemma is the main ingredient of our proof of Theorem
\ref{soccer}.

\begin{lemma} \label{steps} Let $k$ and $n$ be positive integers so that
$n\leq k\leq 2n$.
Let $A(n,k)$ be the set of all lattice paths from $(1,0)$ to $(k,2n-k)$.
Let $B(n,k)$ be the set of all lattice paths from $(1,0)$ to $(k,2n-k)$ 
that never touch the line $x=y$.

Then there exists a natural bijection $f_{n,k}:A(n,k)\rightarrow
B(n,k) \cup A(n,k+1)$.
\end{lemma}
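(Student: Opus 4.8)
The plan is to split $A(n,k)$ by the one feature that distinguishes $B(n,k)$ inside it, namely whether or not a path meets the line $x=y$. A path of $A(n,k)$ that never touches $x=y$ is by definition an element of $B(n,k)$, and there I will let $f_{n,k}$ be the identity. All the work is therefore to match the remaining paths of $A(n,k)$ --- those that do meet $x=y$ --- bijectively with $A(n,k+1)$. On the level of cardinalities this is nothing but the trivial split
\[ |A(n,k)| = \bigl(|A(n,k)| - |A(n,k+1)|\bigr) + |A(n,k+1)|, \]
in which the first summand is the ballot number counting $B(n,k)$; the content of the lemma is to realize this split by an explicit, canonical map rather than merely by a count.

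For the paths that do touch $x=y$ I will use André's reflection. Given such a path $p$, let $C=(m,m)$ be the \emph{first} lattice point at which $p$ meets the diagonal, and reflect the initial segment of $p$, from its starting corner to $C$, across $x=y$, leaving the segment from $C$ to $(k,2n-k)$ untouched. Reflection in $x=y$ interchanges north and east steps and fixes $C$, so it carries $p$ to a northeastern path from the mirror image of the starting corner to $(k,2n-k)$; re-anchoring this step sequence at the original corner (equivalently, translating the whole figure so that its starting corner returns to the original one) then shifts the endpoint by one step east and one step south, from $(k,2n-k)$ to $(k+1,2n-k-1)$. That is exactly the endpoint of the paths constituting $A(n,k+1)$, so this reflect-and-re-anchor operation is my candidate for $f_{n,k}$ on the touching paths. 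I expect the bookkeeping of this $(+1,-1)$ endpoint shift, and the verification that the reflected-and-re-anchored object is again a genuine northeastern lattice path lying in $A(n,k+1)$, to be the main technical point.

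It then remains to check that $f_{n,k}$ is a bijection and that its two pieces are compatible. The two images are automatically disjoint, since the paths in $B(n,k)$ end at $(k,2n-k)$ while those produced from the touching paths end at $(k+1,2n-k-1)$, so $f_{n,k}$ is well defined into $B(n,k)\cup A(n,k+1)$. For the inverse I will start from an arbitrary $q\in A(n,k+1)$, re-anchor it at the mirror corner, and observe that a path joining the mirror corner to $(k,2n-k)$ must meet $x=y$, because its endpoints lie on opposite sides of (or on) the diagonal; reflecting its initial segment up to the first such meeting recovers a unique touching path of $A(n,k)$, which gives surjectivity and injectivity at once. Finally I will record the top of the range $k=2n$ as a degenerate base case: there $A(n,k+1)$ is empty and every path of $A(n,k)$ already avoids $x=y$, so $f_{n,2n}$ is simply the identity. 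The one delicate ingredient, as noted, is confirming that reflection followed by re-anchoring lands precisely in $A(n,k+1)$ and that every target path is forced to cross the diagonal, so that the inverse is everywhere defined.
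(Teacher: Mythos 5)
Your proposal is correct and essentially identical to the paper's own proof: the identity map on $B(n,k)$, and for diagonal-touching paths the Andr\'e reflection of the initial segment up to the first diagonal point followed by a $(1,-1)$ translation, with the inverse obtained exactly as you describe by translating back and reflecting up to the first diagonal intersection. (One shared wrinkle: with the stated starting corner $(0,1)$ your re-anchoring would actually shift the endpoint by $(-1,+1)$ and the lemma would be false, since every such path meets the diagonal; your east--south shift is the right computation for the starting corner $(1,0)$, which is what the paper's Theorem~\ref{soccer} actually uses, so this is a typo in the statement itself rather than a flaw in your argument.)
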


\begin{proof} (of Lemma \ref{steps})
Let $p\in A(n,k)$. If $p\in B(n,k)$, then we set $f_{n,k}(p)=p$. Otherwise,
let $P$ be the first point on $p$ that is also on the line $x=y$.
Let us reflect the part of $p$ that is between $(1,0)$ and $P$ through
the line $x=y$, to get the path $p'$. Then $p'$ is a path from 
$(0,1)$ to $(k,2n-k)$. Translate $p'$ by the vector $(1,-1)$, to get the path
$p''$ that is a path from $(1,0)$ to $(k+1,2n-k-1)$. Then set $f_{n,k}(p)=p''$.

See Figure \ref{stepsfig} for an illustration. 

It is straightforward to see that $f_{n,k}$ is a bijection. Indeed, each
element $q$ of $A(n,k+1)$, when translated by the vector $(-1,1)$, will 
intersect
the line $x=y$, so the unique $f_{n,k}^{-1}(q)$ can be easily recovered. 
\end{proof}

\begin{figure}[ht]
 \begin{center}
  \includegraphics[width=60mm]{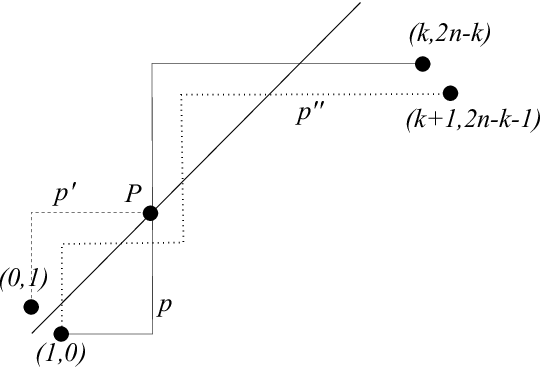}
  \label{stepsfig}
\caption{Obtaining $f(p)=p''$ for a path $p$ intersecting the line $x=y$.
Note that after the point $P$, the paths $p$ and $p'$ coincide. }
 \end{center}
\end{figure}

Now we are in a position to prove Theorem \ref{soccer}.

\begin{proof} (of Theorem \ref{soccer})
Let $p\in X_n$, and let us assume without loss of generality that 
$p$ starts with an east step. Keep the notation of the proof of 
Lemma \ref{steps}. Then, identifying $p$ with the path obtained from $p$
by removing its predetermined first step, $p$ 
 is an element
of $A(n,n)$.
So $f_{n,n}(p)=p_1$ is defined, and $p_1\in B(n,n)\cup A(n,n+1)$.

If $p_1\in B(n,n)$, then we set $F_n(p)=p_1$ and we stop. Otherwise, that is,
if    $p_1\in A(n,n+1)$, then we repeat this procedure. More precisely,
we consider $p_2=f_{n,n+1}(p_1)\in B(n,n+1)\cup A(n,n+2)$. If $p_2\in 
B(n,n+1)$, then we set $F_n(p)=p_2$ and we stop, while
 if $p_2\in A(n,n+2)$, then we continue
by considering $p_3=f_{n,n+2}(p_2)\in  B(n,n+1)\cup A(n,n+2)$.

If we have not stopped by the $i$th step, then in the $i$th step, we consider
$p_i=f_{n,n+i-1}(p_{i-1})\in B(n,n+i-1)\cup A(n,n+i)$. If $p_i\in   B(n,n+i-1)$,
then we set $F_n(p)=p_i$ and we stop. If $p_i \in A(n,n+i)$, then we 
go to step $i+1$, where we consider $p_{i+1}=f_{n,n+i}(p_i)$ according to the
same rules. The procedure stops and $F_n(p)$ gets defined when 
$p_i\in B(n,n+i-1)$, that is, when the current image of $p$ stays below the
main diagonal. 

See Figure \ref{soccergen} for an illustration.
\begin{figure}[ht]
 \begin{center}
  \includegraphics[width=60mm]{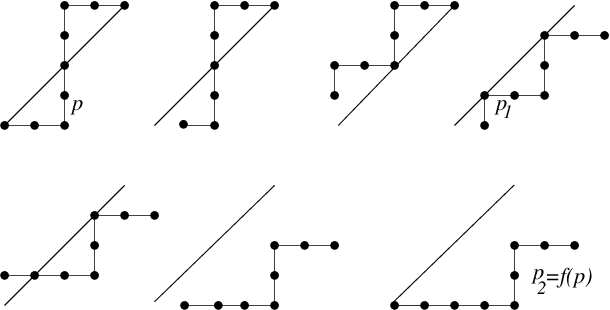}
  \label{soccergen}
\caption{All steps of computing $f(p)$, where $p=EENNNNEE$. }
 \end{center}
\end{figure}

This procedure will define $F_n(p)$ for all $p\in X_n$ starting
with an east step. Indeed, note that
$A(n,2n)=B(n,2n)$, so all paths $A(n,2n)$ are below the main diagonal. 
So in at most $n$ steps, $F_n(p)\in Y_n$ will be defined for all $p\in X_n$
that start with an east step. 

If $p$ starts with a north step, simply reflect $p$ through the line $x=y$,
to get $p^t$. Then proceed as above, to obtain $F_n(p^t)$. Finally, reflect
$F_n(p^t)$ through the line $x=y$ to get $F_n(p)$. Note that in this case,
 $F_n(p)$ will be entirely above the main diagonal, except for its starting
 point.

We will now show that $F_n$ is a bijection by showing that it has an inverse.
Let $u\in Y_n$,
and let us assume without loss of generality that $u$ starts with an east
step. That means that $u$ ends below the main diagonal, in the point 
$(m,2n-m)$ for some $m>n$. So $u\in B(n,m)$, and in particular,
$u\in A(n,m)$. Therefore, the unique path
$u_1=f_{n,m-1}^{-1}(u)\in A(n,m-1)$
 exists. In order to get $u_1$, translate $u$ by
the vector $(-1,1)$. Let $U$ be the first intersection point of the line 
$x=y$ and the translated copy of $u$. Then reflect the part of $u$ that is
before $U$ through the line $x=y$ to get $u_1$. 
Then proceed in a similar manner. That is, since $u_1\in A(n,m-1)$, the unique
path $u_2=f_{n,m-2}^{-1}(u)\in A(n,m-2)$
 exists, then the unique path $u_3=f_{n,m-3}^{-1}(u)\in A(n,m-3)$ exists, and
so on. After $m-n$ steps, we get a path $u_{m-n}\in A(n,n)$, and that is 
the unique path satisfying $F_n(u_{m-n})=u$.
\end{proof}

See Figure \ref{diagram} for the algorithm that defines  the map $F_n$.
\begin{figure}[ht]
 \begin{center}
  \includegraphics[width=70mm]{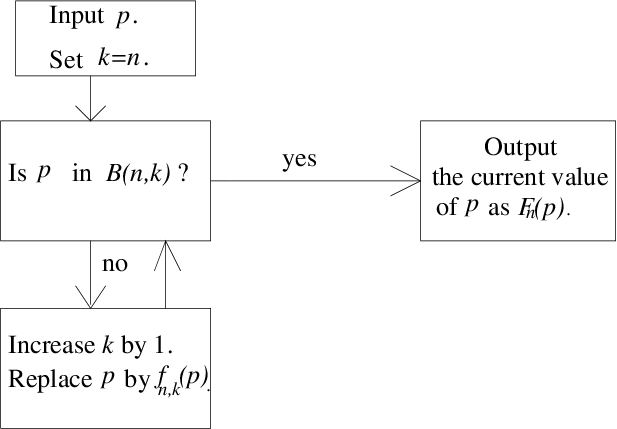}
\caption{The algorithm that produces $F_n(p)$. Note that the algorithm
always stops after at most $n$ increases of $k$.}
\label{diagram}
 \end{center}
\end{figure}

\section{Our Main result: A Bijective Proof of the Second Identity} \label{main}
The goal of this section is to give a bijective proof of formula
(\ref{fullhockey}). 

The lattice paths we use in this section will have two allowed steps,
$(1,1)$ (an ``up'' step) and $(1,-1)$, a ``down'' step. For the rest
of this paper, when we say ``lattice path'', we mean a lattice path of that
kind.  

Let $T_n$ be the set of ordered triples of lattice paths 
 $P=(A,B,C)$, where $A$ is a lattice path consisting of $i$ up steps
and $i$ down steps, 
 $B$ is a lattice path consisting of $j$ up steps and $j$ down steps,
and $C$ is a lattice path consisting of $k$ up steps and $k$ down
steps, for some ordered triple $(i,j,k)$ of non-negative integers satisfying
$i+j+k=n$. Clearly, $|T_n|=\sum_{i+j+k=n}{2i\choose i}
{2j\choose j}{2k\choose k}$, which is the left-hand side
of the identity that we intend to prove. We will place our triples $(A,B,C)$
so that $A$ starts at $(0,0)$; then $C$ will end at $(2n,0)$.

Let $D_n$ be the set of ordered pairs $(H,X)$, where $H$ is a lattice
path consisting of $n$ up steps and $n$ down steps, and $X$ is one of the
$(2n+1)$ lattice points on $H$. It is then clear that $|D_n|=(2n+1)
{2n\choose n}$, which is the right-hand side of the identity to be
proved. Again, we will place the elements of $D_n$ so that $H$ starts at 
$(0,0)$ and ends at $(2n,0)$.

We are going to define a bijection $g:T_n\rightarrow D_n$ in three steps,
the ``trivial step'', the ``core step'', and the ``correction step''.

\subsection{The Trivial Step}
Let $R_n\subseteq T_n$ denote the subset of $T_n$ that consists of pairs
$(A,B,C)$ in which $C$ is empty. Let $D_n^0\subseteq D_n$ be the subset
of $D_n$ consisting of pairs $(H,X)$ in which $X$ is on the
{\em horizontal axis}.  For elements of $R_n$, we simply define
$r(A,B,C)=r(A,B,\emptyset)$ to be the pair $(H,X)\in D_n^0$ in which
$H$ is simply the concatenation of $A$ and $B$, and $X$ is the point
at which $A$ and $B$ meet.
The following proposition is then obvious.
\begin{proposition}
The map $r:R_n \rightarrow D_n^0$ defined in the previous paragraph
is a bijeciton.
\end{proposition}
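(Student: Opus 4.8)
The plan is to prove the proposition by exhibiting an explicit two-sided inverse for $r$, which immediately shows that $r$ is a bijection. The key structural fact is that a lattice point on a path $H$ (with steps $(1,1)$ and $(1,-1)$) lies on the horizontal axis precisely when the prefix of $H$ ending at that point has equally many up steps and down steps. This is what links the defining condition of $R_n$ (namely, $C$ empty, so that $A$ and $B$ are each balanced paths beginning and returning to the axis) with the defining condition of $D_n^0$ (namely, $X$ lying on the horizontal axis).

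First I would check that $r$ is well-defined, that is, that $r(A,B,\emptyset)$ genuinely lands in $D_n^0$. Since $A$ has $i$ up and $i$ down steps and $B$ has $j$ up and $j$ down steps with $i+j=n$, their concatenation $H$ has $n$ up and $n$ down steps, so $H$ belongs to the underlying path set of $D_n$; and the meeting point of $A$ and $B$ is the endpoint of $A$, which lies on the horizontal axis because $A$ is balanced. Hence $X$ is on the horizontal axis and $(H,X)\in D_n^0$.

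Next I would define the candidate inverse $s\colon D_n^0\rightarrow R_n$. Given $(H,X)\in D_n^0$, write $X=(2i,0)$ and split $H$ at $X$ into its prefix $A$ (from $(0,0)$ to $X$) and its suffix $B$ (from $X$ to $(2n,0)$). Because $X$ is on the horizontal axis, $A$ returns to the axis and therefore has $i$ up and $i$ down steps; consequently $B$ has $j:=n-i$ of each, so $(A,B,\emptyset)\in R_n$. Setting $s(H,X)=(A,B,\emptyset)$ gives a well-defined map, and the degenerate cases $i=0$ or $j=0$ (where $A$ or $B$ is empty and $X$ is an endpoint) cause no trouble.

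Finally I would verify $s\circ r=\mathrm{id}_{R_n}$ and $r\circ s=\mathrm{id}_{D_n^0}$. Both composites reduce to the observation that concatenating two balanced paths and then splitting at their meeting point (or splitting a path at a marked axis point and then concatenating the two pieces) returns the original object, since the split/meeting point is uniquely recorded as $X$ and no information is lost in either direction. The only point needing any care is the equivalence ``$X$ on the horizontal axis $\Longleftrightarrow$ the prefix of $H$ at $X$ is balanced,'' and this is immediate from the fact that the height of a point equals the number of up steps minus the number of down steps in the prefix reaching it. There is, honestly, no serious obstacle here; the proposition is genuinely routine bookkeeping about where a path meets the axis, which is why the author calls it obvious.
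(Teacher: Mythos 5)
Your proof is correct and takes the same (and essentially the only) route the paper has in mind when it calls the proposition obvious: the inverse is splitting $H$ at the marked axis point $X$, and both composites are identities because the marked point records exactly where the concatenation occurred. One cosmetic caution: you name your inverse map $s$, which collides with the paper's later bijection $s\colon U_n\rightarrow D_n^+$ in the core step, so choose a different letter if this is to sit inside the paper.
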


\subsection{The Core Step}
First, we define a map on a large subset (almost half) of $T_n$. 
Let $U_n$ be the subset of $T_n$ that consists of elements $(A,B,C)\in T_n$
so that (exactly) one of the following two conditions hold:
\begin{enumerate}
\item The path 
$B$ ends with a down step and the path $C$ has at least one point strictly above
the horizontal axis, or 
\item The path $B$ is empty,  the path $C$ has at least one point strictly above
the horizontal axis.
\end{enumerate}

Let $D_n^+$ denote the subset of $D_n$ consisting of pairs $(H,X)$
in which $X$ is a point of $H$ that is {\em strictly above} the horizontal
axis.

Let $s:U_n\rightarrow D_n^+$ be defined as follows. The {\em height} of
a lattice point is simply its vertical coordinate. Let $(A,B,C)\in T_n$ and
let us assume first that the first criterion is satisfied, that is,
 $B$ ends with a down step and
$C$ has at least one point strictly above the horizontal axis.
Let $K$ be the leftmost point on $C$ that has maximal height. Let this
height be $k$, then we it follows from our assumptions  that $k>0$. 
Let $K$ cut $C$ into the left part $C_1$ and the right part $C_2$. 
Now we define $s(A,B,C)=(H,X)$, where $H$ is the concatenation of the paths
$C_1$, $A$, $B$, and $C_2$ (each path starts at the endpoint of the preceding
path), and where $X$ is the point where $A$ and $B$ meet. 
See Figure \ref{pathsup} for an illustration.

If $B$ is empty, then process is identical to the one above. Note that because
$B$ is empty, $X$ will be the starting point of $C_2$.

Note that in both cases, $X$ has the same height $k$ as $K$, and that
$K$ is the leftmost point of $H$ on the horizontal line $y=k$. In the
special case when $A$ is
empty, the points $X$ and $K$ coincide.

\begin{example}
Figure \ref{pathsup} illustrates how the map $s$ is defined. We drew 
the parts of $H$ that are defined by $A$, $B$, and $C$, with the respective
line styles of $A$, $B$, and $C$. 
 
\begin{figure}[ht]
 \begin{center}
   \includegraphics[width=60mm]{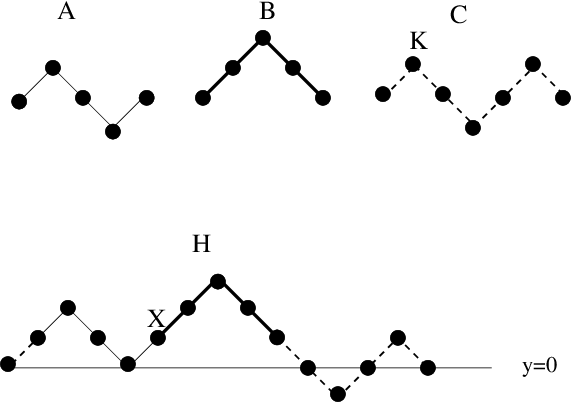}
\caption{The map $s:U_n\rightarrow D_n^+$. }
\label{pathsup}
 \end{center}
\end{figure}
\end{example}

\begin{lemma} \label{sisbij}
The map $s:U_n \rightarrow D_n^+$ defined above is a bijection.
\end{lemma}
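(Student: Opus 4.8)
The plan is to prove that $s$ is a bijection by exhibiting an explicit inverse $t:D_n^+\to U_n$ and checking that $s\circ t$ and $t\circ s$ are the respective identity maps. The forward direction is already essentially in hand: for $(A,B,C)\in U_n$ the path $C$ has maximal height $k\ge 1$, so the marked point $X$ sits at height $k>0$, and $H$ is a genuine lattice path from $(0,0)$ to $(2n,0)$ with $n$ up and $n$ down steps; hence $s(A,B,C)\in D_n^+$. The real content is the construction and verification of $t$.

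To define $t(H,X)$, I would first let $k$ be the height of $X$, which is positive by hypothesis, and then recover the decomposition $H=C_1ABC_2$ from left to right. Since the text already observes that $K$, the endpoint of $C_1$, is the leftmost point of $H$ on the line $y=k$, I would \emph{define} $K$ to be that leftmost height-$k$ point and let $C_1$ be the initial segment of $H$ up to $K$. Because $H$ begins at height $0$, the segment $C_1$ must stay strictly below $k$ except at its endpoint $K$, which guarantees that in the reconstructed $C$ the point $K$ is again the leftmost maximum. The sub-path $A$ is then simply the portion of $H$ from $K$ to the given point $X$ (empty exactly when $K=X$).

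The crux is to split the remaining tail into $B$ and $C_2$ at the correct meeting point $L$, and this is where the defining conditions of $U_n$ do their work. I would characterize $L$ as the leftmost point of $H$ lying at or after $X$, of height $k$, beyond which $H$ never again rises above height $k$; then $B$ is the segment from $X$ to $L$, $C_2$ the segment from $L$ to the end, and $t(H,X)=(A,B,\,C_1C_2)$, joining $C_1$ and $C_2$ at their common height $k$. I expect the \emph{main obstacle} to be the key lemma that when $L\neq X$ the step of $H$ entering $L$ is a \emph{down} step: granting it, the recovered $B$ is either empty (when $L=X$) or ends in a down step, and since $C$ attains height $k>0$ the triple lies in $U_n$. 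The argument I have in mind: if the step into $L$ were an up step, then since $L$ is leftmost while $X$ itself is disqualified, $H$ must exceed height $k$ somewhere in $(X,L)$; taking the last point $R$ before $L$ of height $k+1$, the point immediately following $R$ has height $k$, lies strictly between $X$ and $L$, and $H$ stays $\le k$ from it onward, contradicting the leftmost choice of $L$. The same leftmost property, together with the fact that an $L$ produced by $s$ is entered by a down step from height $k+1$ (the last step of $B$), shows conversely that $s$'s splitting point is exactly this $L$.

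Finally I would verify the two compositions. For $t\circ s=\mathrm{id}$, starting from $(A,B,C)\in U_n$ the recomputed $k$, $K$, $A$, and $L$ agree with the original data: $C_1$ stays below $k$ internally, so $K$ is recovered as the leftmost height-$k$ point of $H$, while $C_2\le k$ throughout and $B$ reaches $k+1$ just before $L$, so $L$ is recovered as the leftmost stay-below point at or after $X$. For $s\circ t=\mathrm{id}$, the reconstructed $C=C_1C_2$ has leftmost maximum exactly $K$, so re-applying $s$ recuts $H$ into the same four blocks and marks the same meeting point $X$. Both checks become routine once the down-step characterization of $L$ is established, so the entire weight of the proof rests on that single lemma.
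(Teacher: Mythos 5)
Your proposal is correct and takes essentially the same approach as the paper: you invert $s$ by recovering $C_1$ and $A$ from the leftmost height-$k$ point $K$ of $H$, and then locating the unique point at which the tail splits into $B$ and $C_2$. Your characterization of that split point (the leftmost height-$k$ point at or after $X$ beyond which $H$ never rises above height $k$) is an equivalent reformulation of the paper's (the rightmost point strictly right of $X$ where $H$ crosses the line $y=k$ via two down steps), and your down-step lemma establishes exactly the fact the paper reads off directly from the crossing description.
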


\begin{proof}
Let $(H,X)\in D_n^+$. We will show that $(H,X)$ has exactly  one preimage
under $s$. Draw a horizontal line $\ell$ 
across $X$. By the paragraph immediately
preceding our lemma, the point $K$ can be recovered as the leftmost common
point of $\ell$ and $H$. Therefore, if $s(A,B,C)=(H,X)$, then $A$ can
be recovered as the (possibly empty) part of $H$ that is between $K$ and $X$.
Then $C_1$ can be recovered as the part of $H$ that is between $(0,0)$
and $K$. 

We still have to show that there is exactly one way in which we can split
the part of $H$ that is between $X$ and $(2n,0)$ into $B$ and $C_2$.
Note that it follows from our definitions that if such $B$ and $C_2$
exist, and $B$ is not empty, then $B$ ends with a down step and $C_2$ starts
 with a down step.
Indeed, $C$ is split into two parts $C_1$ and $C_2$
 at a point of {\em maximal height}, so
$C_2$ never goes above its starting height. 
In particular, this means that if $B$ is non-empty,
then the point $Z$ where 
 $B$ and $C_2$ meet is
the {\em rightmost} point of $\ell$ strictly on the right of $X$ 
that is crossed by $H$, (that is, $H$ 
reaches $Z$ by a down step and leaves it by another down step). So if such
$Z$ exists, then we recover $C_2$ and $B$ as the parts of $H$ between 
$X$ and $Z$ and $Z$ and $(2n,0)$. If such $Z$ does not exist, then $B$ 
is empty, and $C_2$ is the part of $H$ that is between $X$ and $(2n,0)$. 
\end{proof}

\begin{example}
Figure \ref{fig:inverseup} illustrates how to recover $s^{-1}(H,X)$ if
$(H,X)\in D_n^+$ is given.
 
\begin{figure}[ht]
 \begin{center}
   \includegraphics[width=60mm]{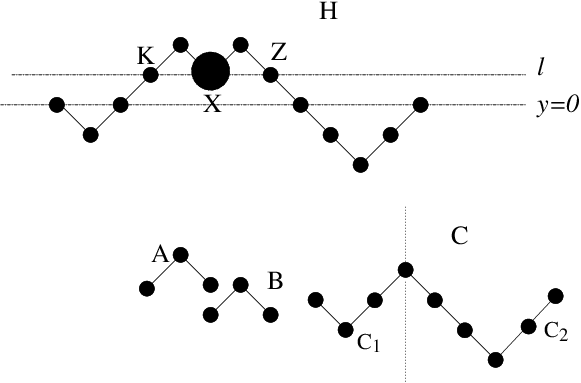}
\caption{Finding the inverse image of $(H,X)\in D_n^+$. }
\label{fig:inverseup}
 \end{center}
\end{figure}
\end{example}

Now let $V_n$ be 
the set of triples $(A,B,C)\in T_n$ that are reflected images of
the elements of $U_n$ through the horizontal axis. That is,
$(A,B,C)\in V_n$ if (exactly) one of the following conditions hold.
\begin{enumerate}
\item The path $B$ ends with an up step and the path
$C$ has a point strictly below the
horizontal axis, or 
\item the path $B$ is empty and the path $C$ has a point strictly below
the horizontal axis. 
\end{enumerate}

Now we define a map $t:V_n\rightarrow D_n^-$ which is essentially the reflected 
image of $s$ through the horizontal axis.

Let $(A,B,C)\in V_n$. Let $k<0$ be the minimum height that any point of $C$
has, and let $K$ be the {\em leftmost} point of $C$ with that height. 
Let $K$ cut $C$ into the left part $C_1$ and the right part $C_2$. 
Define $z(A,B,C)=(H,X)$, where $H$ is the concatenation of the paths
$C_1$, $A$, $B$, and $C_2$ and where $X$ is the point where $A$ and $B$ meet. 

Again, 
if $B$ is empty, then process is identical to the one above. Note that because
$B$ is empty, $X$ will be the starting point of $C_2$.

Just as we saw it when we defined $s$, the point
 $X$ has the same height $k$ as $K$, and
$K$ is the leftmost point of $H$ on the horizontal line $y=k$. In the
special case when $A$ is
empty, the points $X$ and $K$ coincide.

\begin{lemma} \label{tisbij}
The map $t:V_n\rightarrow D_n^{-}$ is a bijection.
\end{lemma}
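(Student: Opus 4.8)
The plan is to observe that $t$ is conjugate to $s$ via reflection through the horizontal axis, so that the bijectivity of $t$ follows immediately from Lemma \ref{sisbij} with essentially no new work. Recall that $D_n^-$ is the subset of $D_n$ in which $X$ lies strictly below the horizontal axis, i.e. the mirror image of $D_n^+$.

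First I would introduce the reflection operator $\rho$ that sends a lattice path (or a lattice point) to its mirror image through the horizontal axis: $\rho$ replaces every up step by a down step and vice versa, negating all heights while preserving horizontal coordinates. Extended componentwise, $\rho$ acts on triples by $\rho(A,B,C)=(\rho A,\rho B,\rho C)$ and on marked pairs by $\rho(H,X)=(\rho H,\rho X)$. Since $\rho$ is an involution that preserves the number of up and down steps, it restricts to bijections $\rho:V_n\to U_n$ and $\rho:D_n^-\to D_n^+$. Indeed, the two defining conditions of $V_n$ are exactly the $\rho$-images of the two defining conditions of $U_n$ (an ending up step becomes an ending down step, and ``strictly below the axis'' becomes ``strictly above the axis''), while a marked point $X$ strictly below the axis becomes $\rho X$ strictly above it.

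The heart of the argument is the identity $t=\rho\circ s\circ\rho$. To verify it, I would take $(A,B,C)\in V_n$ and trace the right-hand side. The minimum height $k<0$ of $C$ and its leftmost attaining point $K$ correspond under $\rho$ precisely to the maximum height $-k>0$ of $\rho C$ and the leftmost point of $\rho C$ reaching it; because $\rho$ fixes horizontal coordinates, the cut $C=C_1C_2$ at $K$ maps to exactly the cut $\rho C=(\rho C_1)(\rho C_2)$ that $s$ performs on $\rho C$. Since $B$ ends with an up step (or is empty), $\rho B$ ends with a down step (or is empty), so $(\rho A,\rho B,\rho C)$ genuinely lies in $U_n$ and $s$ applies. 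By definition $s(\rho A,\rho B,\rho C)$ is the pair whose path is the concatenation $(\rho C_1)(\rho A)(\rho B)(\rho C_2)$, marked where $\rho A$ and $\rho B$ meet. Reflecting this pair back by $\rho$, and using that $\rho$ commutes with concatenation because it preserves left-to-right order, returns the concatenation $C_1ABC_2$ marked where $A$ and $B$ meet, which is precisely $t(A,B,C)$.

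Granting this identity, the conclusion is immediate: $t$ is the composition of the three bijections $\rho:V_n\to U_n$, the map $s:U_n\to D_n^+$ of Lemma \ref{sisbij}, and $\rho:D_n^+\to D_n^-$, hence is itself a bijection. The only point requiring genuine care, and the main (though modest) obstacle, is confirming on the nose that the pair of conditions defining $V_n$ and the notion ``leftmost point of minimal height'' are the faithful $\rho$-images of their counterparts for $U_n$; once that bookkeeping is in place, the conjugation identity, and with it the lemma, follows with no further computation. Alternatively, one could recover $s^{-1}$'s recipe verbatim after reflecting, but the conjugation argument is cleaner and makes the symmetry between Lemmas \ref{sisbij} and \ref{tisbij} transparent.
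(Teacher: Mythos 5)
Your proposal is correct and matches the paper's own proof, which observes precisely that $t = (r_x \circ s) \circ r_x$ where $r_x$ is reflection through the horizontal axis, and concludes bijectivity since all three factors are bijections. Your write-up simply supplies more of the bookkeeping (verifying $\rho(V_n)=U_n$, $\rho(D_n^-)=D_n^+$, and that the leftmost minimal-height cut point corresponds to the leftmost maximal-height cut point) that the paper leaves implicit.
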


\begin{proof} This can be proved in a way analogous  to the way
in which  Lemma \ref{sisbij} was proved. Alternatively, note that
if $r_x$ denotes reflection through the horizontal axis, then
\[t=  (r_x\circ s) \circ r_x,\]
and the proof is immediate since all three maps on the right-hand side
are bijective. 
\end{proof}

\subsection{The Correction Step}
Our work is not finished, since $U_n$ and $V_n$ are not disjoint, and
also, $U_n\cup V_n \neq T_n - R_n$. (Two halves cannot cover
a whole if they overlap.) Let $I_n=U_n\cap V_n$, and let
$J_n=T_n-(U_n\cup V_n)$, that is, the set of elements of $T_n$ that
are not elements of $U_n\cup V_n$. 

It follows from our definitions that the following two
propositions hold. 

\begin{proposition} \label{whatisi} 
The set $I_n\subset T_n$ is the set of triples $(A,B,C)$ such that
 $B$ is empty and $C$ has both a point strictly above
the horizontal axis and a point strictly below the horizontal axis.
\end{proposition}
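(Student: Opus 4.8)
The plan is to prove the proposition by a direct case analysis driven by the two-clause definitions of $U_n$ and $V_n$. Since $I_n = U_n \cap V_n$ by definition, a triple $(A,B,C)$ lies in $I_n$ exactly when it satisfies one of the two defining conditions for membership in $U_n$ \emph{and} one of the two defining conditions for membership in $V_n$. I would therefore cross the two clauses defining $U_n$ with the two clauses defining $V_n$, obtaining four cases to examine.

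The key observation is that three of these four cases are immediately impossible because of incompatible requirements on the path $B$. The first clause of $U_n$ forces $B$ to end with a down step (in particular $B$ is nonempty), while the first clause of $V_n$ forces $B$ to end with an up step; a single path cannot do both, so pairing the first clause of $U_n$ with the first clause of $V_n$ is vacuous. Pairing the first clause of $U_n$ with the second clause of $V_n$, and pairing the second clause of $U_n$ with the first clause of $V_n$, are likewise impossible, since each of these couples a condition that demands a nonempty $B$ with one that demands $B$ be empty. This eliminates every combination except the one in which both triples qualify through their respective second clauses.

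The surviving case yields exactly the claimed description. The second clause of $U_n$ says $B$ is empty and $C$ has a point strictly above the horizontal axis, and the second clause of $V_n$ says $B$ is empty and $C$ has a point strictly below the horizontal axis. Conjoining these gives precisely that $B$ is empty and $C$ has both a point strictly above and a point strictly below the axis. Conversely, any such triple satisfies the second clause of $U_n$ and the second clause of $V_n$ simultaneously, hence lies in $U_n \cap V_n = I_n$, so the two descriptions coincide and the equality of sets is established.

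There is no genuine obstacle here; the content is entirely bookkeeping. The only point requiring care is to keep track of the mutual exclusivity already built into the definitions (each of $U_n$ and $V_n$ is specified by \emph{exactly one} of its two clauses holding), so that the elimination of the three impossible cases is fully justified and the argument reads as an equivalence rather than a one-sided implication.
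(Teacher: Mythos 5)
Your proof is correct and matches the paper's intent: the paper states this proposition without proof, as an immediate consequence of the definitions, and your four-case cross-examination of the clauses of $U_n$ and $V_n$ is exactly the bookkeeping the paper leaves implicit. The elimination of the three incompatible cases (a path $B$ cannot both end in a down step and end in an up step, nor be simultaneously nonempty and empty) and the conjunction of the two surviving second clauses is precisely the right verification, including the converse direction.
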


\begin{proposition} \label{whatisj}
The set $J_n\subset T_n$ is the set of triples $(A,B,C)$ such that
\begin{enumerate}
\item the path $B$ ends with a down step and the path $C$ is not
empty, and the path $C$ is entirely
below the horizontal axis, or
\item  the path $B$ ends with an up step and the path $C$ is not empty,
and the path $C$ is entirely
above the horizontal axis.
\end{enumerate}
\end{proposition}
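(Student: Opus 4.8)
The plan is to prove the characterization by directly computing the complement $T_n-(U_n\cup V_n)$, which reduces the set-theoretic claim to a short case analysis on the last step of $B$. First I would put the membership conditions for $U_n$ and $V_n$ into a uniform form. The two alternatives defining $U_n$ are mutually exclusive (one forces $B$ to end with a down step, hence to be nonempty, while the other forces $B$ to be empty), so here ``exactly one'' is the same as ``at least one,'' and I would record that $(A,B,C)\in U_n$ if and only if $C$ has a point strictly above the horizontal axis \emph{and} $B$ is either empty or ends with a down step. By the symmetric argument, $(A,B,C)\in V_n$ if and only if $C$ has a point strictly below the axis \emph{and} $B$ is either empty or ends with an up step.

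Next I would negate both statements by De Morgan's law, so that $(A,B,C)$ lies in the complement exactly when (the path $C$ has no point strictly above the axis, or $B$ is nonempty and ends with an up step) and simultaneously (the path $C$ has no point strictly below the axis, or $B$ is nonempty and ends with a down step). I would then split into the three exhaustive and exclusive cases: $B$ is empty, $B$ ends with an up step, or $B$ ends with a down step. In the up-step case the first clause holds automatically and the second collapses to ``$C$ has no point strictly below the axis,'' i.e.\ $C$ lies entirely on or above the axis; the down-step case is the mirror image; and in the empty case both clauses survive and force $C$ to have no off-axis point whatsoever.

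The one structural fact I would isolate and reuse is that a \emph{nonempty} path built from $(1,1)$ and $(1,-1)$ steps cannot remain on the horizontal axis, since its first step already leaves the line $y=0$; consequently a nonempty $C$ with no point strictly above the axis must have a point strictly below it, and vice versa. With this observation the three cases resolve cleanly: the empty-$B$ case forces $C$ itself to be empty; the up-step case leaves precisely the triples with $B$ ending in an up step and $C$ nonempty and entirely above the axis (condition (2) of the statement); and the down-step case leaves precisely condition (1). I would also include the endpoint bookkeeping as a sanity check: because the whole concatenation terminates at $(2n,0)$, the path $C$ always ends on the axis, so for a nonempty $C$ the phrases ``entirely above the axis'' and ``entirely below the axis'' are the correct renderings of ``no point strictly below'' and ``no point strictly above.''

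The step I expect to require the most care is the boundary with the trivial-step set $R_n$. Taken literally, $T_n-(U_n\cup V_n)$ also contains every triple in which $C$ is empty, because such a triple can never meet the ``strictly above/below'' requirements of $U_n$ or $V_n$; these triples are exactly $R_n$ and have already been disposed of by the map $r$. I would therefore carry out the computation on $T_n-R_n$ (equivalently, note that the triples with $C$ empty constitute $R_n$ and set them aside), after which the only uncovered triples are those satisfying (1) or (2), which is the assertion to be proved.
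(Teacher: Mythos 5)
Your proof is correct, and it supplies an argument where the paper supplies none: the paper introduces this proposition (together with Proposition~\ref{whatisi}) only with the sentence that it ``follows from our definitions,'' so your De Morgan computation with the three-way split on the last step of $B$ is exactly the verification left implicit. All the ingredients check out. The two clauses defining $U_n$ are indeed mutually exclusive (a path ending in a down step is nonempty), so ``exactly one'' collapses to the union and your uniform form of membership in $U_n$ and $V_n$ is right; the structural fact that a nonempty path of $(1,1)$ and $(1,-1)$ steps leaves the line $y=0$ with its very first step is precisely what resolves each case; and since $C$ begins and ends at height $0$, reading ``entirely below/above'' as ``no point strictly above/below'' is the correct interpretation.

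Your treatment of the boundary with $R_n$ is the most valuable part, because it is the one place where the paper is literally inaccurate rather than merely terse. With $J_n$ defined verbatim as $T_n-(U_n\cup V_n)$, every triple with $C$ empty lies in $J_n$ (an empty $C$ has no point strictly above or below the axis, so such a triple belongs to neither $U_n$ nor $V_n$), yet no such triple satisfies condition (1) or (2) of the statement, which both require $C$ nonempty. Your resolution---computing on $T_n-R_n$, i.e., reading $J_n$ as $(T_n-R_n)-(U_n\cup V_n)$---is clearly the intended one: the paper itself writes ``$U_n\cup V_n\neq T_n-R_n$'' just before defining $J_n$, and Definition~\ref{defofmaster} handles $R_n$ via $r$ in a case meant to be disjoint from the $J_n$ case, which would be ambiguous if $R_n\subseteq J_n$ (indeed $z$ need not even land in $I_n$ on such triples: for $C=\emptyset$ and $B$ ending in a down step but staying weakly above the axis, $z(A,B,\emptyset)=(A,\emptyset,B)$ has last path with no point strictly below the axis). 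So your proposal is not only a complete proof but also flags and repairs a small imprecision in the paper's definition of $J_n$.
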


Fortunately, there is a simple bijection mapping $J_n$ to $I_n$. 
Let $(A,B,C)\in J_n$, and let us assume without loss of generality that
$(A,B,C)$ satisfies the first criterion of Proposition \ref{whatisj}, 
that is, $C$ is not empty,  $C$ is
 entirely below the horizontal axis, and $B$ ends with
a down step. Let $z(A,B,C)=(A,\emptyset,BC)$, that is, 
we get $z(A,B,C)$ by prepending $C$ by $B$, and thus making
the middle path of the image empty. Note that $z(A,B,C)\in I_n$, 
since the middle path of $z(A,B,C)$ is empty, and its last
path, $BC$ has a point strictly above the horizontal axis (since
$B$ ends with a down step, and a point strictly below the horizontal
axis, since $C$ has such a point. 
If $(A,B,C)$ satisfies the second criterion of Proposition
\ref{whatisj}, the definition of $z(A,B,C)$ is identical to the
above, and the proof of the fact that $z(A,B,C)\in I_n$ is 
analogous. 

See Figure \ref{fig:correction} for an illustration.
\begin{figure}[ht]
 \begin{center}
   \includegraphics[width=60mm]{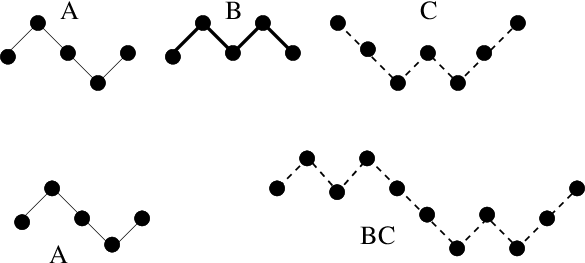}
\caption{The map $z:J_n\rightarrow I_n$. }
\label{fig:correction}
 \end{center}
\end{figure}

\begin{lemma}
The map $z:J_n\rightarrow I_n $ defined above is a bijection.
\end{lemma}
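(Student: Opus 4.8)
The plan is to prove that $z$ is a bijection by exhibiting an explicit two--sided inverse $w:I_n\to J_n$, and to reduce the entire argument to a single case by means of the reflection $r_x$ through the horizontal axis. Since $z(A,B,C)=(A,\emptyset,BC)$, one checks directly that $z\circ r_x=r_x\circ z$, that $r_x$ preserves both $I_n$ and $J_n$ (by Propositions \ref{whatisi} and \ref{whatisj}), and that $r_x$ interchanges the two criteria of Proposition \ref{whatisj}. Hence it suffices to construct the inverse on the images of triples satisfying the first criterion and then to transport the construction by $r_x$ to cover the second.

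So fix $(A,\emptyset,D)\in I_n$; by Proposition \ref{whatisi} the path $D$ has a point strictly above and a point strictly below the horizontal axis. First I would read off, from the very last step of $D$, which criterion of Proposition \ref{whatisj} a preimage must satisfy. If $z(A,B,C)=(A,\emptyset,D)$ comes from the first criterion, then $C$ is nonempty and lies entirely below the axis, so its final step, which is the final step of $D$, is an up step; symmetrically, the second criterion forces the final step of $D$ to be a down step. Thus the terminal step of $D$ determines the case, and I would treat the up--step case in detail.

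In that case I would define the cut point $Q$ to be the starting point of the maximal suffix of $D$ that lies weakly below the horizontal axis. The key structural facts are: (i) $Q$ has height $0$ and is a \emph{downward crossing} of the axis (in the sense used in the proof of Lemma \ref{sisbij}: reached by a down step and left by a down step)---since $D$ has a strictly positive point this suffix is proper, so the step entering $Q$ drops from height $1$ to $0$, and, the suffix being weakly below, the step leaving $Q$ goes from $0$ to $-1$; and (ii) $Q$ is the \emph{rightmost} downward crossing of the axis, because no downward crossing can occur inside a region that stays weakly below $0$. Splitting $D=BC$ at $Q$ then yields $C$ nonempty and entirely below the axis and $B$ nonempty ending in a down step, so $(A,B,C)\in J_n$ by the first criterion of Proposition \ref{whatisj}; I set $w(A,\emptyset,D)=(A,B,C)$.

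Finally I would check the two compositions. That $z(w(A,\emptyset,D))=(A,\emptyset,D)$ is immediate, since $w$ only regroups $B$ and $C$ into $BC=D$. For $w(z(A,B,C))=(A,B,C)$ I must verify that, starting from an element of $J_n$ of the first type, the junction of $B$ and $C$ is exactly the cut point produced above, that is, the start of the maximal weakly--below suffix. One inclusion is clear because $C$ lies entirely below the axis; for the other I would use the terminal up step of $D$, which forces height $-1$ just before the end, so that any ascent to $+1$ after the junction would produce a second downward crossing further to the right and contradict (ii). The case in which $D$ ends with a down step is then handled by conjugating the whole construction with $r_x$. The main obstacle is precisely this uniqueness: one must rule out spurious cut points created by interior returns of $B$ or of $C$ to the axis, and it is exactly the terminal step of $D$ (up versus down), together with fact (ii), that pins down the correct crossing and hence the correct case.
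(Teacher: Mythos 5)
Your proof is correct and follows essentially the same route as the paper: your cut point (the start of the maximal weakly-below suffix, which you show is the rightmost downward crossing) is exactly the paper's point $Y$, the rightmost point at which the third path crosses the horizontal axis, and both arguments establish bijectivity by exhibiting this explicit inverse, your conjugation by $r_x$ and the case split on the final step of $D$ merely repackaging the paper's simultaneous treatment of the two crossing directions. One small repair: your justification of the second inclusion in $w(z(A,B,C))=(A,B,C)$ (``any ascent to $+1$ after the junction\dots'') is misdirected, since $C$ lies weakly below the axis and no such ascent can occur at all; the inclusion instead follows in one line from the terminal down step of $B$, which places the point immediately preceding the junction at height $+1$, so the weakly-below suffix cannot extend leftward past the junction.
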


\begin{proof}
Let $(P,\emptyset,Q)\in I_n$. By definition, $R$ has a point
strictly below, and a point strictly above, the horizontal 
axis. So there is at least one point  that is common
to $R$ and the horizontal axis at which $R$ {\em crosses}
the horizontal axis, from one side to the other.
Let $Y$ be the {\em rightmost} such point. Then the unique
element $z^{-1}(P,\emptyset,Q)=(A,B,C)$ can be found by setting $A=P$, 
setting $C$ to be the part of $Q$ between $Y$ and the end of $Q$, 
and setting $B$ to be the part of $Q$ between the start of
$Q$ and $Y$. 
\end{proof}

Figure \ref{fig:inversecorr} illustrates the map $z^{-1}$. 

\begin{figure}[ht]
 \begin{center}
   \includegraphics[width=60mm]{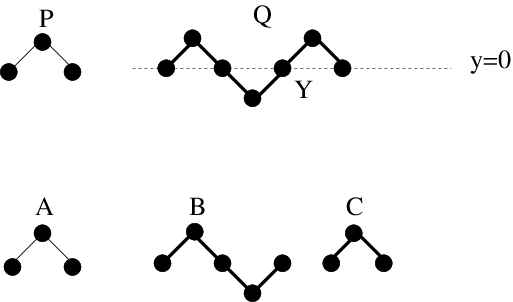}
\caption{Finding the inverse image of $(P,\emptyset, Q)\in I_n$. }
\label{fig:inversecorr}
 \end{center}
\end{figure}

\subsection{The Composite Bijection}
We are now in position to define the bijection that was our
goal to construct in this section.

\begin{definition} \label{defofmaster}
Let $n$ be any positive integer. Let $(A,B,C)\in T_n$, 
and let $g(A,B,C)$ be equal to 
\begin{itemize}
\item $r(A,B,C)$ if $(A,B,C)\in R_n$,
\item $s(A,B,C)$ if $(A,B,C)\in U_n$,
\item $t(A,B,C)$ if $(A,B,C)\in V_n - U_n$, 
\item $t(z(A,B,C))$ if $(A,B,C)\in J_n$.
\end{itemize}
\end{definition}

\begin{theorem}
The map $g:T_n\rightarrow D_n$ given in 
Definition \ref{defofmaster} is a bijection.
\end{theorem}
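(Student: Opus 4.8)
The plan is to assemble the four maps $r,s,t,z$ of the previous subsections into a single bijection by showing three things: that the four cases of Definition \ref{defofmaster} partition the domain $T_n$, that the natural tripartition of the codomain $D_n$ is genuinely a partition, and that $g$ restricts to a bijection between each matching pair of pieces. Since each piece-map has already been shown bijective, all that remains is the bookkeeping that glues them together.

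First I would fix the two partitions. On the codomain side, every marked point $X$ has height that is zero, strictly positive, or strictly negative, so $D_n = D_n^0 \sqcup D_n^+ \sqcup D_n^-$ is a disjoint union. On the domain side, a triple in $R_n$ has $C$ empty, hence satisfies neither defining condition of $U_n$ nor of $V_n$; thus $R_n$ is disjoint from $U_n \cup V_n$. By Propositions \ref{whatisi} and \ref{whatisj}, the triples outside $R_n$ split into $U_n$, $V_n \setminus U_n$, and $J_n$, where here $J_n$ denotes the triples with $C$ nonempty lying outside $U_n \cup V_n$, exactly as described in Proposition \ref{whatisj}. These four sets $R_n$, $U_n$, $V_n \setminus U_n$, $J_n$ are pairwise disjoint and cover $T_n$, so $g$ is well defined.

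Next I would check that $g$ sends each domain piece bijectively onto the matching codomain piece. By the proposition of the trivial step, $r : R_n \to D_n^0$ is a bijection, and by Lemma \ref{sisbij}, $s : U_n \to D_n^+$ is a bijection; note that elements of $I_n = U_n \cap V_n$ lie in $U_n$, so $g$ treats them via $s$, not $t$. The only real work concerns the preimages landing in $D_n^-$, which arise from two sources: $g = t$ on $V_n \setminus U_n$, and $g = t \circ z$ on $J_n$. Since $z : J_n \to I_n$ is a bijection (the correction lemma) and $I_n = U_n \cap V_n$, we have $g(J_n) = t(I_n)$, while $g(V_n \setminus U_n) = t(V_n \setminus I_n)$. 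Because $V_n = (V_n \setminus I_n) \sqcup I_n$ and $t : V_n \to D_n^-$ is a bijection (Lemma \ref{tisbij}), these two images are disjoint and their union is $t(V_n) = D_n^-$. Hence $g$ maps $(V_n \setminus U_n) \cup J_n$ bijectively onto $D_n^-$.

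Combining the three pieces, $g$ is a bijection from $T_n = R_n \sqcup U_n \sqcup (V_n \setminus U_n) \sqcup J_n$ onto $D_n = D_n^0 \sqcup D_n^+ \sqcup D_n^-$. I expect the one genuinely substantive point to be the argument of the third paragraph: the subtlety is that $V_n \setminus U_n$ alone misses exactly the overlap $I_n = U_n \cap V_n$ inside $V_n$, and it is precisely the correction bijection $z : J_n \to I_n$ that supplies the missing preimages, so that composing with the full bijection $t$ recovers all of $D_n^-$. Once this is seen, the remaining verifications—that no codomain element is hit twice (images land in the three disjoint regions $D_n^0, D_n^+, D_n^-$) and that every domain element is used exactly once (the partition of $T_n$)—are routine.
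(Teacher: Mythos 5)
Your proof is correct and is essentially the same argument as the paper's: the paper phrases it by constructing $g^{-1}$ case by case on the partition $D_n = D_n^0 \cup D_n^+ \cup D_n^-$ (using $r^{-1}$, $s^{-1}$, and then $t^{-1}$ followed by $z^{-1}$ exactly when $t^{-1}(H,X)$ lands in $I_n = U_n \cap V_n$), while you run the same decomposition in the forward direction, matching $R_n$, $U_n$, and $(V_n \setminus U_n) \sqcup J_n$ with the three codomain pieces. Your explicit check that $t(V_n \setminus I_n)$ and $t(I_n) = t(z(J_n))$ are disjoint and cover $D_n^-$ is precisely the content of the paper's case analysis, and your remark that $J_n$ should be read as excluding $R_n$ (as in Proposition \ref{whatisj}) is a fair clarification rather than a deviation.
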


\begin{proof}
We show that $g$ has an inverse. Let $(H,X)\in D_n$. If $(H,X)\in D_n^0$,
then $g^{-1}(H,X)=r^{-1}(H,X)$. If $(H,X)\in D_n^+$,
then $g^{-1}(H,X)=s^{-1}(H,X)$. Finally, if  $(H,X)\in D_n^{-}$, then we need
to consider $t^{-1}(H,X)$. If $t^{-1}(H,X)\in V_n-U_n$, 
then $g^{-1}(H,X)=t^{-1}(H,X)$, and if $t^{-1}(H,X) \in  I_n=U_n\cap V_n$, 
then $g^{-1}(H,X)=z^{-1}(t^{-1}(H,X))$. This shows that each element of $D_n$
has exactly one preimage under $g$, proving that $g$ is a bijection. 
\end{proof}

\section{A simple generalization}
Finally, we point out an easy generalization of the result in 
Section \ref{warmup}. Consider the trivial identity
\begin{equation} \label{even}
\left(\frac{1}{\sqrt{1-4x}} \right) ^{2m} = (1-4x)^{-m}.\end{equation}
Equating coefficients of $x^n$ leads to the identity
\begin{equation}
\label{evenbinom}
\sum_{k_1+k_2+\cdots +k_{2m}=n} \prod_{i=1}^{2m} {2k_i\choose k_i} = 
{m+n-1\choose m-1} 4^n,\end{equation}
where the sum on the left-hand side is taken over all $2m$-tuples 
$(k_1,k_2,\cdots ,k_{2m})$ of non-negative integers that sum to $n$.

A combinatorial proof of (\ref{evenbinom}) is easy to obtain, since both
sides count all northeastern lattice paths starting at (0,0) and consisting
of $2n$ steps that have been marked at $m-1$ points so that if $(x,y)$ denotes
the coordinates of a marked point, then both $x$ and $y$ are integers, 
and $x+y$ is even. It is possible that one point is marked several times. 

Indeed, it is obvious that the right-hand side of (\ref{evenbinom}) counts 
such paths, since each path can be marked at $n+1$ points (counting 
the starting and the ending points), and the same point can be marked several
times.  

Now note that between any two consecutive marked points of a path, 
a northeastern lattice path of even length is formed. Turn each such path
into a pair of paths as seen in the proof of (\ref{fullsoccer}). This will
show
that the left-hand side of (\ref{evenbinom})  counts the same paths as
the right-hand side.
  
Note that the case of $m=2$ enables one to make a statement about professional
basketball games, which consist of  four quarters.

\end{document}